\documentclass{article}

\usepackage[utf8]{inputenc} 
\usepackage{url}

\usepackage{amsmath,amsfonts,amssymb}
\usepackage{amsthm}
\usepackage{graphicx}
\usepackage{enumerate}
\usepackage{multirow,bigdelim}
\usepackage{mathtools}
\usepackage{soul} 
\usepackage{pstricks}
\usepackage[margin=3cm]{geometry}
\usepackage{array}
\usepackage[normalem]{ulem}
\usepackage{url}
\usepackage{hyperref}
\usepackage{enumerate}
\usepackage{enumitem}
\usepackage{mathtools}
\hypersetup{
	colorlinks,
	linkcolor={red!50!black},
	citecolor={blue!50!black},
	urlcolor={blue!80!black}
}

\newcommand{\SOC}[2]{{\mathcal{L}^{#2} _{#1}}}

\newcommand{\interior}{\mathrm{int}\,}

\newcommand{\closure}{\mathrm{cl}\,}

\newcommand{\conicHull}{\mathrm{cone}\,}

\newcommand{\norm}[1]{\lVert{#1}\rVert}
\newcommand{\inProd}[2]{\langle #1 , #2 \rangle }

\newcommand{\stdCone}{ {\mathcal{K}}}

\newcommand{\stdFace}{ \mathcal{F}}

\newcommand{\stdInt}{ {e}}

\renewcommand{\Re}{\mathbb{R}}

\newcommand{\tr}{\mathrm{tr}}

\newcommand{\T}{*} 
\newcommand{\alg}{\mathcal{A}}

\newcommand{\UT}{\mathcal{T}}

\newcommand{\stdHC}{ {\mathcal{K}(\alg)}}

\DeclarePairedDelimiter\abs{\lvert}{\rvert}%
%
\newtheorem{definition}{Definition}

\newtheorem{proposition}[definition]{Proposition}

\newtheorem{theorem}[definition]{Theorem}
\newtheorem*{proposition*}{Proposition}
\theoremstyle{remark}

\title{The $p$-cones in dimension $n \geq 3$ are not homogeneous when $p\neq 2$}
\author{
	Masaru Ito%
		\thanks{Department of Mathematics, College of Science and Technology, Nihon University,
			1-8-14 Kanda-Surugadai, Chiyoda-Ku, Tokyo 101-8308, Japan 
			(\texttt{ito.m@math.cst.nihon-u.ac.jp)}.}
	\and
	Bruno F. Louren\c{c}o%
	\thanks{Department of Computer and Information Science, Faculty of Science and Technology, Seikei
		University, 3-3-1 Kichijojikitamachi, Musashino-shi, Tokyo 180-8633, Japan.
		(\texttt{lourenco@st.seikei.ac.jp)}}
}

\begin{document}
\maketitle
\begin{abstract}
Using the $T$-algebra machinery we show that, up to linear isomorphism, the only strictly convex homogeneous cones 
in $\Re^n$ with $n \geq 3$ are the $2$-cones, also known as Lorentz cones or second order cones.
In particular, this shows that the $p$-cones are not homogeneous when $p\neq 2$, $1 < p <\infty$ and 
$n\geq 3$, thus answering a problem proposed by Gowda and Trott. 

 \noindent \textbf{Keywords:} Homogeneous cone, $p$-cone, $T$-algebra.
\end{abstract}
\section{Introduction}
We prove that if $p\neq 2$ and $1< p < \infty$, then the $p$-cones $\SOC{p}{n}$ in the 
$n$-dimensional vector space $\Re^n$ are not homogeneous when $n \geq 3$. This solves a problem posed by Gowda and Trott in Section 6 of \cite{GT14}, where they proved the non-homogeneity of $\SOC{1}{n}$ and its dual $\SOC{\infty}{n}$.
In fact, we will prove a more general statement and show 
that, up to isomorphism, the only strictly convex homogeneous cones are the $2$-cones, that is, the Lorentz cones.

Recall that a closed proper cone is said to be symmetric if it is self-dual and homogeneous.
However, an often overlooked point is that \emph{self-duality} is a concept that depends on the choice of 
inner product.
Recently, in a published article,  we saw  an attempt to equip $\Re^n$ with an inner product (depending on $p$) so 
that $\SOC{p}{n}$ becomes self-dual. In the same article, it was claimed that $\SOC{p}{n}$ 
is homogeneous, thus showing that it is a symmetric cone under an appropriate inner product. For a discussion of its flaws, we refer to the paper by Miao, Lin, Chen \cite{MLC17}.
The result we prove here implies, in particular, that for $n \geq 3$ and $p \neq 2$, 
$\SOC{p}{n}$ is never a symmetric cone. This, however, does not rule out the possibility 
of $\SOC{p}{n}$ being self-dual under an appropriate inner product, which also seems to be an unsettled problem. 
We remark that it can be  shown that  $\SOC{p}{n}$ is not self-dual under  ``reasonable'' inner products, see Section 3 in \cite{MLC17}.

Another motivation for this work comes from the fact that optimization problems involving the 
$p$-norms or the $p$-cones are sometimes refereed to as \emph{nonsymmetric optimization problems}, see, for example, \cite{NY12,SY15}.
Although not as popular as optimization over second order cones, problems over $p$-cones do appear in the literature occasionally \cite{XY00,NY12,SY15} and it might be fair to say that they are an important 
part of the conic linear programming landscape.
 But as far as we know, no previous work has actually went through the trouble of checking whether it is indeed the case  that $\SOC{p}{n}$ is non-symmetric, non-homogeneous or non-self-dual.
Had  $\SOC{p}{n}$ turned out to be symmetric after an appropriate change of inner product, this fact alone would have very interesting algorithmic repercussions, so it is somewhat surprising that this was left unknown for a relatively long time.
In the same vein, had $\SOC{p}{n}$ turned out to be   homogeneous but non-symmetric, following several works on homogeneous cones \cite{GO98,TX01,CH03,CH09}, this would also have remarkable consequences.


Reading the literature on $p$-cones, one might get the impression that the great 
hurdle is the (apparent) lack of self-duality.
However, not only it seems to be unknown whether the $p$-cones are never self-dual but we think that the real culprit is the lack of homogeneity. 
Consider, for instance, interior point methods.
Their performance is highly dependent on the so-called \emph{complexity parameter} of self-concordant barriers and for $\SOC{2}{n}$, the complexity of an optimal barrier is $2$, regardless of $n$. In contrast, all the barriers 
for $\SOC{p}{n}$ with known closed-form expressions have complexity parameter proportional to $n$ and the best one so far has complexity $4n$, see Section 1 in the work by Nesterov \cite{NY12}. 
Recently, Hildebrand proved  \cite{H14} that a regular $n$-dimensional convex cone admits a barrier with parameter not exceeding $n$, which implies that  $\SOC{p}{n}$ also has a barrier with parameter $n$, although it could be hard to compute it or to obtain a closed form expression.
If $\SOC{p}{n}$ were homogeneous of rank $r$, Proposition \ref{prop:rank2} would imply $r \leq 2$. Then, 
we would readily have access to a barrier 
with parameter  not larger than $2$, due to a result by G\"uler and Tun\c{c}el, see Theorem 4.1 in \cite{GO98} and the related paper by Truong and Tun\c{c}el \cite{TT03}. 
As far as we know, a barrier for $\SOC{p}{n}$ not depending on $n$ has never appeared previously in the literature and would be a truly remarkable object. 
It seems that it is still an open problem to determine the optimal barrier parameter for $\SOC{p}{n}$. 

The outline of the proof is simple. Using the theory of $T$-algebras, we first check that the closure 
of an homogeneous cone of rank $r \leq 2$ is isomorphic to either $\{0\}$, $\Re_+$, $\Re_+^2$ or 
$\SOC{2}{n}$. Then, we argue that if $\SOC{p}{n}$ were to be a homogeneous cone then its rank 
would be less or equal than $2$, which is the main missing piece we supply in this article. 
Now, Gowda and Trott proved in \cite{GT14} that $\SOC{2}{n}$ and 
$\SOC{p}{n}$ are not isomorphic for $n \geq 3$ and $p \neq 2$. So for $n\geq 3$ and $1 < p < \infty$, $\SOC{p}{n}$ cannot possibly be homogeneous, since it is not isomorphic to the lower dimensional cones 
$\{0\}$, $\Re_+$, $\Re_+^2$.

\section{Preliminaries on convex cones}
A \emph{convex cone} is a set $\stdCone$ contained in some real vector space $\alg$, 
such that $\alpha x + \beta y \in \stdCone$, for all $x,y \in \stdCone$ and $\alpha,\beta \in \Re_+$. 
If $\alg$ is equipped with some inner product $\inProd{\cdot}{\cdot}$ we 
can define the dual cone of $\stdCone$ as $\stdCone^* = \{x \in \alg \mid \inProd{x}{y} \geq 0, \forall y \in \stdCone \}$. 
 We will write $\interior \stdCone, \closure \stdCone, \dim \stdCone$ for the interior, closure  and dimension of $\stdCone$, respectively. 

A convex cone $\stdCone$ is said to be \emph{pointed} if $\closure \stdCone \cap -\closure \stdCone =\{0\}$ and 
it is said to be \emph{full-dimensional} if $\interior \stdCone \neq \emptyset$. Note that all 
convex cones can be made to be full-dimensional if we substitute the underlying space by the span of $\stdCone$.
An \emph{automorphism of $\stdCone$} is a linear bijection $Q$ such that 
$Q\stdCone = \stdCone$. Then, $\stdCone$ is said to be \emph{homogeneous} if it is a 
full-dimensional pointed convex cone such that its group of automorphisms acts transitively on $\interior \stdCone$. This means that for every $x,y \in \interior \stdCone$, there is an automorphism  $Q$ of $\stdCone$ for 
which $Q(x) = y$.

In some works on convex cones it is common to consider  \emph{open convex cones}, that is, convex cones 
satisfying $\stdCone = \interior \stdCone$. 
In fact, the definition of ``convex cone'' 
by Vinberg included the requirement that $\stdCone$ should be open. In optimization, however, it is common to 
consider \emph{closed convex cones}. Suppose that $\stdCone$ is full-dimensional, then $\interior \stdCone = \interior (\interior \stdCone) = \interior (\closure \stdCone)$.  Therefore, for the study of homogeneity, it does not matter whether we study $\interior \stdCone$, $\stdCone$ or $\closure \stdCone$. 

Two convex cones $\stdCone _1, \stdCone _2 $ are said to be \emph{isomorphic} if there 
is a linear bijection $Q$ such that $Q \stdCone _1 = \stdCone _2$. Note that if 
$\stdCone_1$ and $\stdCone _2$ are full-dimensional, then $\closure \stdCone _1$ and $\closure \stdCone _2$ are isomorphic 
if and only if $\interior \stdCone _1$ and $\interior \stdCone _2$ are isomorphic.

Let $C$ be a convex set. A convex set  $\stdFace \subseteq C$ is said to be 
a \emph{face} of $C$ if the following condition holds: if $x,y \in C$ and $\alpha x + (1-\alpha)y \in \stdFace$ for some $0 < \alpha < 1$, then $x,y \in \stdFace$. A face $\stdFace$ is said to be \emph{proper} if $\stdFace \neq C$. An \emph{extreme point}  is a face consisting of a single point. 

\subsection{Strictly convex cones}
A compact convex set $C$ with nonempty interior is said to be \emph{strictly convex}  if every proper face of $C$ is an extreme point. Similarly, a pointed closed convex cone $\stdCone$ with nonempty interior is said to be \emph{a strictly convex cone} if every proper face of $\stdCone$ has dimension $0$ or $1$.

A norm $\norm{\cdot}$ on a real vector space is said to be a \emph{strictly convex norm} if
$$\norm{x+y}<2 ~\text{ whenever }~ \norm{x}=\norm{y}=1,~~ x \ne y$$
or, equivalently,
$$
\norm{\alpha x + (1-\alpha)y} < 1  ~\text{ whenever }~ \norm{x}=\norm{y}=1,~~ x \ne y,~~\alpha \in (0,1).
$$
The relations between these notions is as follows.
\begin{proposition}\label{prop:strict-conv}
Let $\norm{\cdot}$ be a norm on a real vector space $\alg$.
Then, the following are equivalent.
\begin{enumerate}[label=({\it \roman*})]
\item $\norm{\cdot}$ is a strictly convex norm.
\item $B = \{x \in \alg \mid \norm{x} \leq 1\}$ is a strictly convex set.
\item $\stdCone = \{(t,x) \in \Re \times \alg \mid \norm{x} \leq t \}$ is a strictly convex cone.
\end{enumerate}
\end{proposition}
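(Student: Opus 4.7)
The plan is to establish the chain of equivalences $({\it i}) \Leftrightarrow ({\it ii}) \Leftrightarrow ({\it iii})$ directly from the definitions, exploiting two simple structural facts: proper faces of the unit ball lie on the unit sphere, and the cone $\stdCone$ is the conic hull of a compact base isomorphic to $B$.

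For $({\it i}) \Leftrightarrow ({\it ii})$, I would first note that every proper face $F$ of the unit ball $B$ must sit in the unit sphere $\{x : \norm{x} = 1\}$, because the interior of $B$ equals $\{x : \norm{x} < 1\}$ and no interior point of a convex set can belong to a proper face. Assume $({\it ii})$ and pick $x, y$ on the unit sphere with $x \neq y$. If the norm failed strict convexity at these points, the midpoint $z := \tfrac{x+y}{2}$ would also have norm $1$ and therefore lie in some proper face $F$ of $B$; the defining property of a face forces $x, y \in F$, but strict convexity of $B$ collapses $F$ to a singleton, contradicting $x \neq y$. Conversely, assuming the norm is strictly convex, if a proper face $F$ of $B$ contained two distinct points $x, y$, then $\norm{x} = \norm{y} = 1$ and the midpoint $\tfrac{x+y}{2} \in F$ would also have norm $1$, again contradicting strict convexity.

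For $({\it ii}) \Leftrightarrow ({\it iii})$, I would observe that $\stdCone$ is the conic hull of the compact base $\{1\} \times B \subset \Re \times \alg$ and invoke the standard one-to-one correspondence between the faces of a pointed convex cone with a compact base and the faces of that base (together with the apex). Concretely, a face $F \subseteq B$ corresponds to the face $\{(t, y) \in \Re \times \alg : t \geq 0,\ y \in tF\}$ of $\stdCone$, of dimension $\dim F + 1$, while the only additional face is the apex $\{0\}$, of dimension $0$. Hence every proper face of $\stdCone$ has dimension at most $1$ precisely when every proper face of $B$ has dimension $0$, which is exactly the equivalence asserted.

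The only real technical point is setting up the face correspondence in $({\it ii}) \Leftrightarrow ({\it iii})$: verifying that the conic hull of a face of $B$ really is a face of $\stdCone$, and that no face of $\stdCone$ other than $\{0\}$ escapes this description. This is a routine exercise about cones with compact bases, and once it is in hand the rest of the proof is immediate from the definitions together with the observation that proper faces of $B$ lie on the unit sphere.
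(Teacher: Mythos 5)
Your proof is correct and follows essentially the same route as the paper: the heart of both arguments is the bijection $\stdFace \mapsto \conicHull(\{1\}\times \stdFace)$ between the faces of $B$ and the nonzero faces of $\stdCone$, which yields ({\it ii}) $\Leftrightarrow$ ({\it iii}), and both treat ({\it i}) $\Leftrightarrow$ ({\it ii}) by an elementary observation (you argue via proper faces of $B$ lying on the unit sphere, the paper via the equivalent characterization that strict convex combinations of points of $B$ land in $\interior B$). No gaps.
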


\begin{proof}
Note that $B$ is strictly convex if and only if $\alpha x + (1-\alpha)y \in \interior B$ whenever $x,y \in B$ and $\alpha \in (0,1)$. This proves the equivalence {\it (i)} $\Leftrightarrow$ {\it (ii)}.
 
Now, it is straightforward to check that, for a compact convex set $C$, the map
$$
\stdFace \mapsto \conicHull(\{1\}\times \stdFace) = \{(t,tx) \mid t \geq 0,~ x \in \stdFace\}
$$
is a bijection from the set of faces of $C$ onto the set of nonzero faces of $\conicHull(\{1\}\times C)$.
Hence, the equivalence {\it (ii)} $\Leftrightarrow$ {\it (iii)} immediately follows
because $\stdCone = \conicHull(\{1\} \times B)$ holds.
\end{proof}
We are particularly interested in the case of $p$-norms that is,
$$
\norm{x}_{p} = \left( \sum _{i=1}^n \abs{x_i}^p \right)^{1/p}
$$
for $p \in [1,\infty)$ and $\norm{x}_{\infty} =  \max _{1\leq i \leq n} \abs{x_i}$.
Then, the $p$-cone is defined as
$$\SOC{p}{n}  = \{(t,x) \in \Re\times \Re^{n-1} \mid t \geq \norm{x}_p \}.$$
It follows by Proposition \ref{prop:strict-conv} that only the $p$-cones $\SOC{p}{n}$ for $1 < p < \infty$ are strictly convex, since those are the values that correspond to strictly convex norms.

\section{$T$-algebras}
$T$-algebras were proposed by Vinberg \cite{V63} as a natural framework for the study of homogeneous convex cones. 
For a more recent treatment, including its connections to optimization, see the work of Chua \cite{CH03,CH09}.
We recall that an \emph{algebra} is a vector space $\alg$ over some field $\mathbb{K}$ such that  $\alg$ is equipped with a product $\times:\alg\times \alg \to \alg$ that satisfies
\begin{align*}
(a+b)\times c & = a\times c + b\times c\\
c\times (a+b) & = c\times a + c\times b\\
(\alpha a)  \times (\beta b) & = (\alpha \beta) a \times b,
\end{align*}
for all $a,b,c \in \alg, \alpha,\beta \in \mathbb{K}$.
In the cases we discuss here, we will always consider the real number field $\Re$ and write 
$a\times b = ab$, for $a,b \in \alg$. Then, a \emph{matrix algebra of rank $r$} is an algebra over $\Re$ that is equipped with a decomposition as a direct sum $\alg = \bigoplus _{i,j=1}^r \alg _{ij}$ where the $\alg _{ij}$ are subspaces  satisfying the following properties:
\begin{align*}
\alg _{ij} \alg _{jk} & \subseteq \alg _{ik} \\
\alg _{ij} \alg _{kl} & = \{0\} \qquad \text{if } j \neq k.  
\end{align*} 
This decomposition is called a \emph{bigradation}.
Therefore, in  a matrix algebra we can represent an element $a \in \alg$ as a generalized 
matrix $a = (a_{ij})_{i,j=1}^r$, where $a_{ij} \in \alg _{ij}$, for all $i,j$. With that, the multiplication in $\alg$ follows the usual matrix multiplication rules $(ab)_{ij} = \sum _{k=1}^r a_{ik} b_{kj}$.

A \emph{matrix algebra with involution} is a matrix algebra equipped with a linear bijection $\T:\alg \to \alg$ such that
$$a^{\T \T} = a,~~ (ab)^\T = b^\T a^\T~~ \text{and} ~~A_{ij}^\T = A_{ji}~~ \text{ for all }  i,j.$$
With that,  we have $(a^\T)_{ij} = a_{ji}^\T$.

Finally, a \emph{$T$-algebra} is a matrix algebra with involution satisfying the following properties, see 
Definition 4 in \cite{CH09}.
\begin{enumerate}[label=({\it\roman*})]
\item For each $i$,  $\alg _{ii}$ is a subalgebra isomorphic to $\Re$. \label{ax:1}
\end{enumerate}
Let $\rho _i: \alg_{ii} \to \Re$ denote the algebra isomorphism and 
let $\stdInt _i$ denote the unit element in $\alg _{ii}$, \textit{i.e.}, 
the element satisfying $\rho _i(\stdInt _i) = 1$.
Furthermore, define the function $\tr: \alg \to \Re$ by $\tr(a) := \sum _{i=1}^r \rho _i (a_{ii})$.
\begin{enumerate}[label=({\it \roman*})]
 \setcounter{enumi}{1}
\item For all $a \in \alg$  and all $i,j \in \{1, \ldots, r\}$ we have 
$\stdInt _i a_{ij} = a_{ij}$ and $a_{ji}\stdInt _i = a_{ji}.$ \label{ax:2}
\item For all $a \in \alg$  and all $i,j \in \{1, \ldots, r\}$, we have  $\rho_i(a_{ij}b_{ji}) = \rho _j(b_{ji}a_{ij})$. \label{ax:3}
\item For all $a,b,c \in \alg$ and $i,j,k \in \{1, \ldots, r\}$, we have $a_{ij}(b_{jk}c_{ki}) = (a_{ij}b_{jk})c_{ki}$. \label{ax:4}
\item For all $a \in \alg$ and $i,j\in \{1, \ldots, r\}$, we have $\rho _i(a_{ij}^* a_{ij}) \geq 0$, with equality if and only if $a_{ij} = 0$. \label{ax:5}
\item For all $a,b,c \in \alg$ and $1 \leq i\leq j \leq k \leq l \leq r$, we have $a_{ij}(b_{jk}c_{kl}) = (a_{ij}b_{jk})c_{kl}$. \label{ax:6}
\item For all $a,b \in \alg$, $1\leq i\leq j \leq k \leq r$ and $1 \leq l\leq k \leq r$,  we have $a_{ij}(b_{jk}b_{lk}^*) = (a_{ij}b_{jk})b_{lk}^*$. \label{ax:7}
\end{enumerate}

For a $T$-algebra $\alg$ of rank $r$, we write $\UT$ for the set of ``upper-triangular matrices'' in $\alg$, i.e., 
$\UT = \{a \in \alg \mid a_{ij} = 0~\text{ if }~1\leq j < i \leq r \}$. We define
$\UT_{+} = \{a \in \UT \mid \rho_i(a_{ii}) \geq 0~\text{ if }~ 1 \leq  i\leq r \}$ and
$\UT_{++} = \{a \in \UT \mid \rho_i(a_{ii}) > 0~\text{ if }~ 1 \leq i  \leq r \}$. With that, 
we define the convex cone associated to the $T$-algebra $\alg$ as
$$
\stdHC = \{tt^* \mid t \in \UT_{++} \}.
$$
We define an inner product over $\alg$ by taking 
$\inProd{x}{y} = \tr(x^*y)$.
 We also have
$$
\closure \stdHC = \{tt^* \mid t \in \UT_{+} \},
$$
see the remarks before Proposition 1 in \cite{CH09}. Vinberg proved in \cite{V63}
the following landmark result.
\begin{theorem}\label{theo:vin}
Let $\stdCone$ be an open homogeneous convex cone. Then, there is a 
$T$-algebra $\alg$ for which $\stdHC = \stdCone$. Conversely, if 
$\stdHC = \stdCone$ for some $T$-algebra $\alg$, then $\stdCone$ is 
an open homogeneous convex cone.
\end{theorem}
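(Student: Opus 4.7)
The plan is to establish Vinberg's correspondence in both directions; the converse direction is largely algebraic bookkeeping once the formalism is set up, while the forward direction contains the genuinely deep content.

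For the converse, I would first show that $\UT_{++}$ forms a group under the algebra multiplication, with inverses computable by block-triangular recursion --- the invertibility of the diagonal subalgebras $\alg_{ii} \cong \Re$ from axiom (i) makes this possible. The key step is then to define, for each $t \in \UT_{++}$, a linear map $\pi(t)$ on $\alg$ whose action on $\stdHC$ sends $ss^\T$ to $(ts)(ts)^\T$. The crux, and the reason the $T$-algebra axioms take their particular form, is that this map must be unambiguous and linear in $ss^\T$; the partial associativity axioms (iv), (vi), and (vii), together with axiom (iii) and the involution identity $(ab)^\T = b^\T a^\T$, are exactly the identities needed to express $(ts)(ts)^\T$ linearly in $ss^\T$. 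Transitivity of the resulting action on $\stdHC$ is then immediate from $\pi(t)(\stdInt \stdInt^\T) = tt^\T$, where $\stdInt = \sum_i \stdInt_i$, and openness and convexity of $\stdHC$ follow from a Cholesky-type characterisation combined with openness of $\UT_{++}$ inside $\UT$.

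For the forward direction, I would invoke the classical theorem of Vinberg that any open homogeneous convex cone $\stdCone$ admits a simply transitive solvable Lie subgroup $G$ of its automorphism group. Fixing a basepoint in $\stdCone$ identifies $\stdCone$ diffeomorphically with $G$ via the orbit map, and the Lie algebra of $G$ admits a bigraded decomposition coming from the commutation relations with a distinguished abelian subalgebra; transporting this to the ambient space produces the bigradation $\alg = \bigoplus \alg_{ij}$. The product on $\alg$ is defined so that left-multiplication in each piece encodes the corresponding infinitesimal automorphism, and the involution is read off from the inner product $\inProd{x}{y} = \tr(x^\T y)$ with respect to which $\stdCone$ is self-dual at the basepoint. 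Axioms (i)--(vii) are then verified by translating the Jacobi identity, positivity of the characteristic function of $\stdCone$, and the commutator relations between graded components into the required algebraic identities.

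The main obstacle is the forward direction. Beyond the deep input of the simply transitive solvable group, verifying the partial associativity axioms (iv), (vi), and (vii) requires a careful case analysis of triple products across the graded components --- these axioms encode precisely the rigidity that distinguishes homogeneous cones from arbitrary convex cones, so there is no obvious shortcut around the Lie-theoretic bookkeeping. For the converse, the chief subtlety is the well-definedness and linearity of $\pi(t)$, but this reduces to a finite collection of identity checks that follow mechanically once the axioms are in place.
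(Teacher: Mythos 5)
The paper does not prove this theorem at all: it is quoted as Vinberg's result and attributed to \cite{V63} (with \cite{CH09} as a secondary reference), so there is no in-paper argument to compare against. Your outline does follow the route of Vinberg's original proof --- the converse via the simply transitive action of the group generated by the maps $ss^\T \mapsto (ts)(ts)^\T$ for $t \in \UT_{++}$, and the forward direction via a simply transitive solvable subgroup of the automorphism group and the normal (bigraded) decomposition of its Lie algebra. So the strategy is the correct one.

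As a proof, however, what you have written is a plan rather than an argument, and one step is asserted on grounds that do not support it. In the converse direction you claim that ``openness and convexity of $\stdHC$ follow from a Cholesky-type characterisation combined with openness of $\UT_{++}$ inside $\UT$.'' Openness, yes: the map $t \mapsto tt^\T$ restricted to the Hermitian part is an open map near any point of $\UT_{++}$ once you have the group structure. But convexity of $\{tt^\T \mid t \in \UT_{++}\}$ is genuinely nontrivial --- the image of an open convex set under the quadratic map $t \mapsto tt^\T$ in a nonassociative algebra has no a priori reason to be convex, and in Vinberg's development this is a separate argument (roughly, one identifies $\stdHC$ with the interior of a dual cone, or characterises it by the positivity of all the ``principal minors'' $\rho_i((\cdot)_{ii})$ under the triangular group action, and convexity is extracted from that description). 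Transitivity of a linear group action on a set does not imply the set is convex, so this cannot be waved through. In the forward direction, the entire content rests on the uncited existence theorem for the simply transitive solvable subgroup and on axiom verifications you explicitly defer; that is an acceptable way to structure the exposition, but it means the proposal cannot be checked as a proof --- it is a table of contents for one. If the goal is to match what the paper does, the honest answer is that the paper simply cites \cite{V63}, and your sketch, suitably expanded, would reconstruct that citation's content rather than replace it with something shorter.
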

Following  Theorem \ref{theo:vin}, we define the rank of a homogeneous convex cone as 
the rank of the underlying algebra. This is well-defined because if $\alg$ and 
$\alg'$ are two $T$-algebras such that $\interior \stdCone = \stdHC = \stdCone(\alg')$, 
then $\alg$ and $\alg'$ must be isomorphic due to Theorem 4 in Chapter 3 of \cite{V63}.
We now state a few elementary observations about diagonal elements. 
\begin{proposition}\label{prop:diag}
Let $\alg$ be a $T$-algebra and $a \in \alg, t \in \UT_{+}$, then for every $i$
\begin{enumerate}[label=({\it \roman*})]
	\item $a_{ii}^* = a_{ii}$,
	\item $a_{ii} = \rho _i(a_{ii}) \stdInt_i$,
	\item $\rho _i ((tt^*)_{ii}) \geq 0$.
\end{enumerate}
\end{proposition}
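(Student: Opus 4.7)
The three statements are short consequences of axioms \ref{ax:1}--\ref{ax:5}, and my plan is to dispatch them in the order (ii) $\to$ (i) $\to$ (iii), since (ii) makes $\alg_{ii}$ one-dimensional and that fact is then used repeatedly.

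For (ii), axiom \ref{ax:1} is doing almost all of the work. Because $\rho_i:\alg_{ii}\to\Re$ is an algebra isomorphism with $\rho_i(\stdInt_i)=1$, the subspace $\alg_{ii}$ is one-dimensional and spanned by $\stdInt_i$. Writing an arbitrary $a_{ii}\in\alg_{ii}$ as $c\,\stdInt_i$ and applying $\rho_i$ then forces $c=\rho_i(a_{ii})$.

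For (i), I would first prove $\stdInt_i^\T=\stdInt_i$; the general claim follows immediately, since (ii) gives $a_{ii}^\T = \rho_i(a_{ii})\stdInt_i^\T = a_{ii}$. Because the involution swaps the bigrading ($\alg_{ij}^\T = \alg_{ji}$), we have $\stdInt_i^\T \in \alg_{ii}$, so by (ii) there is a scalar $c$ with $\stdInt_i^\T = c\,\stdInt_i$. Applying the involution once more gives $\stdInt_i = c^2\,\stdInt_i$, forcing $c = \pm 1$. To pin down the sign, I would combine axiom \ref{ax:5} (applied to $a=\stdInt_i$ with $j=i$) with axiom \ref{ax:2}: the latter yields $\stdInt_i^\T \stdInt_i = c\,\stdInt_i \stdInt_i = c\,\stdInt_i$, and then $\rho_i(\stdInt_i^\T \stdInt_i) = c \geq 0$ rules out $c=-1$.

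For (iii), I would expand the diagonal block using the matrix multiplication rule together with $(t^\T)_{ki} = t_{ik}^\T$:
\[
(tt^\T)_{ii} = \sum_{k=1}^r t_{ik}(t^\T)_{ki} = \sum_{k=1}^r t_{ik}t_{ik}^\T,
\]
then apply $\rho_i$ and use axiom \ref{ax:3} with $(a_{ij},b_{ji})=(t_{ik},t_{ik}^\T)$ to rewrite:
\[
\rho_i\bigl((tt^\T)_{ii}\bigr) = \sum_{k=1}^r \rho_i(t_{ik}t_{ik}^\T) = \sum_{k=1}^r \rho_k(t_{ik}^\T t_{ik}),
\]
and each summand is nonnegative by axiom \ref{ax:5}. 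There is no real obstacle; the only mild subtlety is pinning down the sign in (i), and this is why I sequence the arguments so that (ii) is available to reduce every question about $\alg_{ii}$ to a scalar computation.
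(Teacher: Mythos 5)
Your proposal is correct. For (ii) and (iii) it is essentially the paper's own proof: (ii) is the one-dimensionality of $\alg_{ii}$ coming from Axiom \ref{ax:1}, and (iii) is the expansion $(tt^*)_{ii}=\sum_{k} t_{ik}t_{ik}^*$ followed by Axiom \ref{ax:5} (the paper applies the axiom directly in the form $\rho_i(t_{ik}t_{ik}^*)\geq 0$, while you first flip to $\rho_k(t_{ik}^* t_{ik})$ via Axiom \ref{ax:3}; the two readings of Axiom \ref{ax:5} agree precisely because of Axiom \ref{ax:3}, so this is immaterial). The only genuine divergence is in (i): the paper observes that $*$ restricts to an algebra automorphism of $\alg_{ii}\cong\Re$ and invokes the fact that the identity is the only such automorphism, whereas you compute the scalar $c$ with $\stdInt_i^*=c\,\stdInt_i$, deduce $c^2=1$ from $a^{**}=a$, and eliminate $c=-1$ with the positivity Axiom \ref{ax:5}. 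Your route is more elementary in that it never quotes the rigidity of $\Re$, but it imports positivity into a statement that does not need it: from $\stdInt_i=\stdInt_i\stdInt_i$ and $(ab)^*=b^*a^*$ you get $c\,\stdInt_i=\stdInt_i^*=(\stdInt_i\stdInt_i)^*=\stdInt_i^*\stdInt_i^*=c^2\stdInt_i$, so $c^2=c$, and $c\neq 0$ since $*$ is a bijection, giving $c=1$ without Axiom \ref{ax:5}. Either way, both your argument and the paper's are sound.
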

\begin{proof}	
\begin{enumerate}[label=({\it \roman*})]
	\item The restriction of the involution $*$ to $\alg _{ii}$ becomes an automorphism of $\alg_{ii}$. 
	Since 	$\alg_{ii}$ is isomorphic to $\Re$ and the only 
	automorphism of $\Re$ is the identity map, we conclude that the restriction 
	of $*$ to $\alg_{ii}$ must be the identity map as well.
	\item The map $\rho _i$ is an algebra isomorphism, so it satisfies 
	$\rho _i(\alpha x) = \alpha \rho _i(x)$ for every $\alpha \in \Re$ and 
	$x \in \alg _{ii}$. Since $\rho _i ( \rho _i(a_{ii}) \stdInt_i) = \rho _i(a_{ii}) 1$ and 
	$\rho _i$ is a bijection, we conclude that $a_{ii} = \rho _i(a_{ii}) \stdInt_i$.
	\item Note that $(tt^*)_{ii} = \sum _{j=i}^r t_{ij} t_{ij}^*$. Therefore, 
	$\rho _i ((tt^*)_{ii}) =  \sum _{j=i}^r \rho _{i}(t_{ij} t_{ij}^*) $. From 
	Axiom \ref{ax:5}, every term inside the summation is nonnegative, so that 
	$\rho _i ((tt^*)_{ii}) \geq 0$.
\end{enumerate}

\end{proof}

\section{Main result}
We will now gather a few results that will allow us to prove our main result. 
Recall that if $\stdCone$ is a closed convex cone and $y \in \stdCone^*$ then 
$\stdCone \cap \{y\} ^\perp = \{x \in \stdCone \mid \inProd{x}{y} = 0 \}$ is always a nonempty 
face of $\stdCone$.
\begin{proposition}\label{prop:h_face}
The closure of a homogeneous convex cone $\stdCone$ of rank $r \geq 1$ contains a proper face of dimension 
at least $r-1$.	
\end{proposition}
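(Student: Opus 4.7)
The plan is to realize $\closure \stdCone$ via the $T$-algebra machinery and then exhibit an exposed face cut out by the ``last'' diagonal unit $\stdInt_r$. Since a homogeneous cone is full-dimensional, $\interior \stdCone$ is an open homogeneous convex cone, and by Theorem \ref{theo:vin} there is a $T$-algebra $\alg$ of rank $r$ with $\stdHC = \interior \stdCone$; consequently $\closure \stdCone = \{tt^* \mid t \in \UT_+\}$. As the candidate face I take
\[
F := \closure \stdCone \cap \{\stdInt_r\}^\perp
\]
with respect to the trace inner product $\inProd{x}{y} = \tr(x^*y)$.

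The first task is to check that $F$ is actually a face, which reduces to verifying $\stdInt_r \in (\closure \stdCone)^*$. I would unwind the trace pairing using Proposition \ref{prop:diag}(i) (so that $\stdInt_r^* = \stdInt_r$), the bigradation, and Axiom \ref{ax:2} to obtain the identity $\inProd{\stdInt_r}{x} = \rho_r(x_{rr})$ for every $x \in \alg$. Proposition \ref{prop:diag}(iii) then yields $\inProd{\stdInt_r}{x} \geq 0$ on $\closure \stdCone$, so $F$ is the exposed face defined by $\stdInt_r$.

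Next I would verify that $F$ is proper. Any $x \in \interior \stdCone$ equals $tt^*$ for some $t \in \UT_{++}$; since $t$ is upper-triangular, the sum defining $(tt^*)_{rr}$ collapses to the single term $t_{rr} t_{rr}^*$, and Axiom \ref{ax:5} combined with $\rho_r(t_{rr}) > 0$ forces $\rho_r(t_{rr} t_{rr}^*) > 0$. Hence no interior point lies in $F$. For the dimension bound I would use the $r-1$ partial identities $\stdInt_1, \ldots, \stdInt_{r-1}$ as explicit witnesses: each belongs to $\closure \stdCone$ because $\stdInt_i \stdInt_i^* = \stdInt_i \stdInt_i = \stdInt_i$ by Axiom \ref{ax:2} and Proposition \ref{prop:diag}(i), and each lies in $\{\stdInt_r\}^\perp$ because its only nonzero bigradation block is $\alg_{ii}$ with $i \neq r$. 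The direct-sum decomposition $\alg = \bigoplus \alg_{ij}$ makes them linearly independent, and combined with $0 \in F$ this gives $\dim F \geq r - 1$.

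The only genuine piece of bookkeeping will be the trace identity $\inProd{\stdInt_r}{x} = \rho_r(x_{rr})$: it needs careful use of the bigradation together with Axiom \ref{ax:2} to see that every term of $\tr(\stdInt_r x)$ but one vanishes. Once that identity is in hand, both the nonnegativity of $\stdInt_r$ on $\closure \stdCone$ and its strict positivity on the interior follow immediately from Proposition \ref{prop:diag}, and the partial identities $\stdInt_1, \ldots, \stdInt_{r-1}$ dispatch the dimension bound.
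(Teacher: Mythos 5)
Your proof is correct and follows essentially the same route as the paper's: expose a face of $\closure\stdCone$ by a diagonal unit (the paper uses $\stdInt_1$, you use $\stdInt_r$) and witness the dimension bound with the remaining $r-1$ idempotents $\stdInt_i \in \closure\stdCone$, which are pairwise orthogonal to the chosen one by the bigradation. Your explicit check of properness via $(tt^*)_{rr} = t_{rr}t_{rr}^*$ is a small addition the paper leaves implicit, but the argument is the same.
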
	
\begin{proof}
If $r = 1$, we take $\{0\}$ as the desired face. So suppose that $r\geq 2$.
Consider a $T$-algebra $\alg$ of rank $r$  such that $\interior \stdCone = \stdHC$. We will prove 
that $(\closure \stdHC) \cap \{\stdInt _1 \}^\perp$ has dimension greater or equal 
than $r-1$.
We first 
argue that $\stdInt _1 \in \stdHC ^*$. 
Let $x \in \stdHC$, then $(\stdInt _1 x)_{11} = x_{11}$. 
By Proposition \ref{prop:diag}, we have $\rho _1(x_{11}) \geq 0$, therefore 
$\inProd{\stdInt _1}{x} = \tr (\stdInt _1 x) = \rho _1(x_{11}) \geq 0$. 

We have $\closure \stdCone = \closure \stdHC = \{ tt^* \mid t \in \UT_{+}\}$. For all 
$i$, we have $\stdInt _i \in \UT_{+}$, so that $\stdInt _i \stdInt _i^* = \stdInt _i \stdInt _i = \stdInt _i \in \closure \stdCone$. Since $\alg$ is a matrix algebra, $\stdInt _i \stdInt _j = 0$ if 
$i \neq j$. Therefore, $\inProd{\stdInt _i}{\stdInt _j} = 0$ for $i \neq j$. This shows 
that $\{\stdInt _2, \ldots, \stdInt _r \} \subseteq (\closure \stdCone )\cap \{\stdInt_1 \}^\perp$, 
so that the dimension of $(\closure \stdCone) \cap \{\stdInt_1 \}^\perp$ is at least $r-1$.

\end{proof}

\begin{proposition}\label{prop:rank2}
Let $\stdCone$ be a convex cone such that $\closure \stdCone$ is strictly convex. If $\stdCone$ is homogeneous and nonzero, then its rank is less or equal than $2$.
\end{proposition}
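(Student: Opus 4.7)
The proof I have in mind is very short: it is essentially a one-line consequence of Proposition \ref{prop:h_face} combined with the definition of strict convexity for cones.

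The plan is as follows. Since $\stdCone$ is homogeneous and nonzero, Theorem \ref{theo:vin} produces a $T$-algebra $\alg$ with $\interior \stdCone = \stdHC$, whose rank $r$ is, by definition, the rank of $\stdCone$; note $r \geq 1$ because $\stdCone \neq \{0\}$. Suppose, for contradiction, that $r \geq 3$. Applying Proposition \ref{prop:h_face} to $\stdCone$ (whose closure equals $\closure \stdHC$) yields a proper face $\stdFace \subseteq \closure \stdCone$ with $\dim \stdFace \geq r-1 \geq 2$. On the other hand, by hypothesis $\closure \stdCone$ is strictly convex, so every proper face has dimension $0$ or $1$, which contradicts $\dim \stdFace \geq 2$. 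Hence $r \leq 2$.

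The only thing worth double-checking is that the notion of ``proper face'' in Proposition \ref{prop:h_face} matches the notion appearing in the definition of strict convexity for cones given earlier in the paper; both are just ``face distinct from the whole cone,'' so this is immediate. There is no real obstacle here — the content is entirely loaded into Proposition \ref{prop:h_face}, whose proof exhibits the $r-1$ independent directions $\stdInt_2,\ldots,\stdInt_r$ orthogonal to $\stdInt_1$ and lying in $\closure \stdCone$, thereby forcing a high-dimensional proper face whenever $r$ is large.
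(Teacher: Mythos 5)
Your proposal is correct and follows exactly the paper's argument: the paper also derives the result as an immediate consequence of Proposition \ref{prop:h_face}, noting that rank $r \geq 3$ would force a proper face of $\closure\stdCone$ of dimension at least $2$, contradicting strict convexity. No gaps; your extra remark about matching the two notions of ``proper face'' is a fair sanity check but not an issue.
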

\begin{proof}
It is an immediate consequence of Proposition \ref{prop:h_face}. If $\stdCone$ were 
homogeneous of rank $r \geq 3$, then its closure would have a proper face of dimension at least $2$ contradicting the strict convexity.
Therefore, $r \leq 2$. 
\end{proof}

It turns out that the closure of  a homogeneous convex cone of rank $r \leq 2$ is self-dual under an appropriate inner product, which is  mentioned by Vinberg in \cite{V63}, see Section 8 of Chapter 3. 
Using that, the next result follows from the classification of symmetric cones, see Chapter 5 in \cite{FK94}. Nevertheless, we provide a direct proof that explicitly exhibits the isomorphism while avoiding the aforementioned classification result.
\begin{proposition}\label{prop:h_2}
Let $\stdCone$ be a nonzero homogeneous convex cone of rank $r \leq 2$. Then $\closure \stdCone$ is isomorphic to 
either $\Re_+, \Re^2_+$ or $\SOC{2}{n}$ for some $n$. 
\end{proposition}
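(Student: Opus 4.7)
The plan is to invoke Vinberg's theorem (Theorem \ref{theo:vin}) to identify $\interior \stdCone$ with $\stdHC$ for some $T$-algebra $\alg$ of rank $r \leq 2$, and then compute $\closure \stdHC = \{tt^* \mid t \in \UT_{+}\}$ by brute force. When $r = 1$, Axiom \ref{ax:1} gives $\alg \cong \Re$ and $tt^* = t^2$, so the closure is immediately isomorphic to $\Re_+$.

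For $r = 2$, I would write a general $t \in \UT_{+}$ as $t = \alpha \stdInt_1 + v + \beta \stdInt_2$ with $\alpha, \beta \geq 0$ and $v \in \alg_{12}$, and expand $tt^*$ block by block using the bigradation together with the identity-absorption Axiom \ref{ax:2}. Most cross-terms vanish because $\alg_{ij}\alg_{kl} = 0$ when $j \neq k$, and the surviving blocks are
\[
(tt^*)_{11} = \alpha^2 \stdInt_1 + vv^*, \quad (tt^*)_{22} = \beta^2 \stdInt_2, \quad (tt^*)_{12} = \beta v, \quad (tt^*)_{21} = \beta v^*.
\]
Since $tt^*$ is self-adjoint, $\closure\stdHC$ lies in the $(n+2)$-dimensional self-adjoint subspace of $\alg$, where $n := \dim \alg_{12}$. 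I would coordinate this subspace by $(a, b, w) \in \Re \times \Re \times \alg_{12}$ via $x \leftrightarrow a\stdInt_1 + w + w^* + b\stdInt_2$. The form $q(v) := \rho_1(vv^*)$ is a positive definite quadratic form by Axiom \ref{ax:5} (with polarized bilinear form $B(v, w) = \tfrac12(\rho_1(vw^*) + \rho_1(wv^*))$, which is symmetric in $v,w$). Eliminating $(\alpha, \beta, v)$ from the parametrization yields the clean description $\closure \stdHC = \{(a, b, w) : a, b \geq 0,\ ab \geq q(w)\}$.

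To finish, I split on $n$. If $n = 0$, the cone is visibly $\Re_+^2$. If $n \geq 1$, I pick a linear isomorphism $\Phi : \alg_{12} \to \Re^n$ with $q(v) = \|\Phi(v)\|_2^2$ (available because $q$ is a positive definite quadratic form), and apply the linear change of coordinates $(t_0, x_0) := ((a+b)/2, (a-b)/2)$. Under this change, $ab = t_0^2 - x_0^2$ and the constraints $a \geq 0, b \geq 0$ become $t_0 \geq |x_0|$; combined with $t_0^2 - x_0^2 \geq q(w)$ the cone is transformed into $\{(t_0, x_0, y) \in \Re \times \Re \times \Re^n : t_0 \geq 0,\ t_0^2 \geq x_0^2 + \|y\|_2^2\}$, which is $\SOC{2}{n+2}$.

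The main technical obstacle is the block expansion of $tt^*$: one has to carefully track which cross-terms vanish by the bigradation and which reduce via Axiom \ref{ax:2} to give the precise off-diagonal scaling by $\beta$. Once the expansion is in hand, verifying that $q$ is a positive definite quadratic form, inverting the parametrization, and applying the $(a,b) \mapsto (t_0, x_0)$ rotation are all routine.
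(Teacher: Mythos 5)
Your proposal is correct and follows essentially the same route as the paper: identify $\interior\stdCone$ with $\stdHC$ via Vinberg's theorem, expand $tt^*$ blockwise for rank $2$ to characterize the cone by the inequality $ab \geq \rho_1(vv^*)$, and then apply the linear change of variables $((a+b)/2,(a-b)/2)$ together with a linear identification of the positive definite form $\rho_1(vv^*)$ with the squared Euclidean norm to land on $\SOC{2}{m+2}$. The only cosmetic differences are that you parametrize the closure over $\UT_+$ directly while the paper works with $\UT_{++}$ and the open cone, and that you justify positive definiteness of $q$ by polarization rather than via the trace inner product $\inProd{x}{y}=\tr(x^*y)$.
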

\begin{proof}
Let $\alg$ be a $T$-algebra of rank $r$ such that $\stdHC = \interior \stdCone$.	
If $r = 1$, it is clear that $\closure \stdCone$ must be isomorphic to 
$\Re_+$. If $r = 2$, we consider two cases. If $\alg_{12} = \alg_{21} = \{0\}$ 
it is clear that $\closure \stdCone$ must be isomorphic to $\Re_+^2$. So now we consider 
the case where the dimension of $\alg_{12}$ is greater than zero and we identify 
$\alg_{12}$ with some $\Re^m$ with $m \geq 1$.

We first establish necessary and sufficient conditions for $a \in \alg$ to belong in 
$\stdHC$. Due to Proposition \ref{prop:diag}, we can write  
$a_{11} = \alpha \stdInt _1, a_{22} = \beta \stdInt_2$, where $\alpha = \rho _1(a_{11})$ and 
$\beta = \rho _2 (a_{22})$.

Now, $a \in \stdHC$ if and only if there is $t \in \UT_{++}$ such that $a = tt^*$. 
We 
can write $t_{11} = \gamma _1 \stdInt _1, t_{22} = \gamma _2 \stdInt_2$, where $\gamma _1 = \rho _1(t_{11})$ and 
$\gamma _2 = \rho _2 (t_{22})$. We have 
\begin{align}
tt^* & = t_{11}t_{11} + t_{12}t_{12}^* + t_{12}t_{22}  + t_{22}t_{12}^* + t_{22}t_{22} \notag \\
& = (\gamma _1 ^2 +  \rho _1(t_{12}t_{12}^*) ) \stdInt _1 + \gamma _2 t_{12}\stdInt_2  + \gamma _2  \stdInt _1 t_{12}^* + \gamma _2^2 \stdInt _2 \notag \\
& = (\gamma _1 ^2 +  \rho _1(t_{12}t_{12}^*) ) \stdInt _1 + \gamma _2 t_{12}  + \gamma _2  t_{12}^* + \gamma _2^2 \stdInt _2, \notag 
\end{align}
where the last equality follows from Axiom \ref{ax:2}. Then, comparing the expressions for $tt^*$ and $a$, we conclude that in order to have $tt^* = a$, we must have $a = a^*$ and
\begin{align*}
\gamma_2 & = \sqrt{\beta} \notag \\
t_{12} & = \frac{a_{12}}{\sqrt{\beta}} \notag \\
\gamma _1^2   & = \alpha - \frac{\rho_1(a_{12}a_{12}^*)}{\beta}. \label{eq:aux} 
\end{align*} 
Since $\gamma _1$ and $\gamma _2$ must be positive, we conclude that $a \in \stdHC$ if and only if $a = a^*$, $\alpha > 0$, $\beta > 0$ and $\alpha \beta - {\rho_1(a_{12}a_{12}^*)} > 0$. The last inequality can be expressed equivalently as 
\begin{equation}
\left(\frac{\alpha + \beta}{2} \right)^2 > \left(\frac{\alpha - \beta}{2} \right)^2 + {\rho_1(a_{12}a_{12}^*)}. \label{eq:aux4}
\end{equation}
Now, notice that we have $\rho_1(a_{12}a_{12}^*)=\inProd{a_{12}^*}{a_{12}^*} = \inProd{a_{12}}{a_{12}}$ when $a=a^*$.
Restricting $\inProd{\cdot}{\cdot}$ as an inner product on $\alg_{12}$, we can write
$\inProd{u}{v} = u^T Q^T Qv$ for each $u,v \in \alg_{12}$ with some $m \times m$ nonsingular matrix $Q$.
Then, we conclude that $a \in \stdHC$ if and only if $a = a^*$, $\alpha + \beta > 0$ and 
\begin{equation}
\left(\frac{\alpha + \beta}{2} \right)^2 > \left(\frac{\alpha - \beta}{2} \right)^2 + (Qa_{12})^T(Qa_{12}),
\end{equation}
which is equivalent to the statement that $(\frac{\alpha + \beta}{2},\frac{\alpha - \beta}{2}, Qa_{12} ) \in \interior \SOC{2}{m+2}$.

Let $\mathcal{H}(\alg) = \{a \in \alg \mid a = a^*\}$ and consider the linear map $S:\mathcal{H}(\alg) \to \Re^{m+2}$ defined by 
$$S(a) := \left(\frac{\rho_1(a_{11}) + \rho_2(a_{22})}{2},\frac{\rho_1(a_{11}) - \rho_2(a_{22})}{2}, Qa_{12} \right) = \left(\frac{\alpha + \beta}{2}, \frac{\alpha - \beta}{2}, Qa_{12}\right).$$
Since $Q$ is nonsingular, $S$ is a bijection. Then, the discussion so far implies that 
$S(\stdHC) = \interior \SOC{2}{m+2}$. Therefore, $S$ is the desired isomorphism.

\end{proof}

 We thus arrive at the main result.
\begin{theorem}
Suppose that $\stdCone$ is a convex cone such that $\closure \stdCone$ is strictly convex. 
Then, it is homogeneous if and only 
if $\closure \stdCone$ is isomorphic to $\{0\}$, $\Re_+$, $\Re ^2_+$ or $\SOC{2}{n}$.	

In particular, $\SOC{p}{n}$ is not homogeneous if $p \neq 2$ and $n \geq 3$.
\end{theorem}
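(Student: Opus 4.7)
The plan is to split the theorem into its two implications and then derive the $p$-cone corollary as a dimension-counting argument combined with the previously known result of Gowda and Trott.

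For the forward direction, assume that $\stdCone$ is homogeneous and $\closure \stdCone$ is strictly convex. If $\stdCone = \{0\}$ we are done immediately, so suppose $\stdCone$ is nonzero. Proposition~\ref{prop:rank2} then forces the rank to be at most $2$, and Proposition~\ref{prop:h_2} gives that $\closure \stdCone$ is isomorphic to one of $\Re_+$, $\Re^2_+$, or $\SOC{2}{n}$. This is essentially a one-line reduction to earlier results.

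For the reverse direction, I would verify that each of the listed cones is in fact homogeneous with strictly convex closure. Homogeneity is standard in each case: for $\Re_+$ and $\Re^2_+$ the positive diagonal scalings act transitively on the interior; for $\SOC{2}{n}$ homogeneity is the classical fact that Lorentz cones are symmetric. Strict convexity is checked directly from the definition: the only proper faces of $\Re_+$ are $\{0\}$ (dimension $0$); the proper faces of $\Re^2_+$ are $\{0\}$ and the two coordinate rays (dimensions $0$ and $1$); and the proper faces of $\SOC{2}{n}$ are $\{0\}$ together with the extreme rays on its boundary. The $\{0\}$ cone is a trivial case.

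For the $p$-cone part, when $1 < p < \infty$ with $p \neq 2$, Proposition~\ref{prop:strict-conv} guarantees that $\SOC{p}{n}$ is strictly convex. Since $\dim \SOC{p}{n} = n \geq 3$, it cannot be linearly isomorphic to $\{0\}$, $\Re_+$, or $\Re^2_+$, so the only candidate from the classification would be $\SOC{2}{n}$. This is precisely what Gowda and Trott ruled out in \cite{GT14} for $n \geq 3$ and $p \neq 2$. The remaining cases $p = 1$ and $p = \infty$ fall outside the strictly convex setting but were already handled directly in \cite{GT14}, so they can simply be cited.

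There is no serious obstacle here: all the heavy machinery has been packaged into Propositions~\ref{prop:strict-conv}, \ref{prop:rank2}, and \ref{prop:h_2}, and the remaining work is bookkeeping. The only subtle point to be careful about is to invoke the Gowda--Trott non-isomorphism result in its proper form, to rule out the last candidate $\SOC{2}{n}$, since this is the single ingredient not proved in the present paper.
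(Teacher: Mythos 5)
Your proposal is correct and follows essentially the same route as the paper: reduce the forward direction to Propositions~\ref{prop:rank2} and \ref{prop:h_2}, note the listed cones are homogeneous for the converse, and dispose of $\SOC{p}{n}$ by dimension counting plus the Gowda--Trott non-isomorphism result (which the paper makes concrete via the Lyapunov rank invariant). The only cosmetic difference is that you also verify strict convexity of the listed cones, which is not needed since it is already a hypothesis of the theorem.
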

\begin{proof}
It is clear that $\{0\}$, $\Re_+$, $\Re ^2_+$ and $\SOC{2}{n}$ are homogeneous convex cones.
For the converse: If $\stdCone = \{0\}$ we are done. Otherwise, by Propositions \ref{prop:rank2} and \ref{prop:h_2},
$\stdCone$ must be isomorphic to $\Re_+$, $\Re ^2_+$ or $\SOC{2}{n}$. This concludes 
the first half.

Now, suppose that $\stdCone$ is homogeneous and  $\stdCone = \SOC{p}{n}$ for $p \neq 2$, $1 < p < \infty$ and $n \geq 3$.
Then $\stdCone$ is strictly convex and, therefore, must be isomorphic to one of the four cones listed above.
The only possible candidate 
is $\SOC{2}{n}$, since all the others have  dimension less or equal than $2$. 
However, the results by Gowda and Trott in \cite{GT14} imply that $\SOC{2}{n}$ and $\SOC{p}{n}$ are not isomorphic if $p \neq 2$ and $n \geq 3$, since an invariant known as  ``Lyapunov rank'' is $\frac{n^2-n+2}{2}$ for the former and $1$ for the latter. Furthermore, isomorphic cones have the same Lyapunov rank. See Section 1 and Theorem 5 in \cite{GT14} and the related paper \cite{GTa14}, for more details. This gives a contradiction, so $\stdCone$ is not homogeneous.

We remark that  Gowda and Trott already showed that $\SOC{1}{n}$ and $\SOC{\infty}{n}$ are not homogeneous for $n \geq 3$, see Theorem 7 and Section 6 of \cite{GT14}.
\end{proof}

\section*{Acknowledgements}
The authors would like to thank Prof.~Chen for kindly agreeing to send us a preliminary version of \cite{MLC17}. 
The authors would also like to thank an anonymous referee for many valuable suggestions that improved the presentation of this paper.
The second author  was partially supported by the Grant-in-Aid for Scientific Research (B) (15H02968) from Japan Society for the Promotion of Science.
\bibliographystyle{abbrvurl}
\bibliography{bib}
\end{document}